\newtheorem{theorem}{Theorem}[section]
\newtheorem{remark}{Remark}[section]
\newtheorem{corollary}{Corollary}[section]
\newtheorem{example}{Example}[section]
\newtheorem{proposition}{Proposition}[section]
\numberwithin{equation}{section}
\begin{document}
\title{An extension of Jensen's operator inequality and its application  to Young inequality}
\author{Hamid Reza Moradi, Shigeru Furuichi, Flavia-Corina Mitroi-Symeonidis and Razieh Naseri}
\subjclass[2010]{Primary 47A63, 26A51. Secondary 26D15, 47A64, 46L05.}
\keywords{Convexifiable functions, Jensen's inequality, Young inequality, operator inequality.} 

\maketitle

\begin{abstract}
Jensen's operator inequality for convexifiable functions is obtained. This result contains classical Jensen's operator inequality  as a particular case. As a consequence, a new refinement and a reverse of Young's inequality are given.
\end{abstract}
\pagestyle{myheadings}
\markboth{\centerline {An extension of Jensen's operator inequality and its application  to Young inequality}}
{\centerline {H.R. Moradi, S. Furuichi, F.-C. Mitroi-Symeonidis \& R. Naseri}}
\bigskip
\bigskip
\section{\bf Introduction and Preliminaries}
In this article, $\mathcal{H}$ will denote a Hilbert space, and the term \lq\lq operator" we shall mean endormorphism of $\mathcal{H}$. The following result that provides an operator version for the Jensen inequality is due to Mond and Pe\v cari\'c \cite{7}: 
\begin{theorem}

	{\upshape(Jensen's operator inequality for convex functions).} Let $A\in \mathcal{B}\left( \mathcal{H} \right)$ be a self-adjoint operator with $Sp\left( A \right)\subseteq \left[ m,M \right]$ for some scalars $m<M$. If $f\left( t \right)$ is a convex function on $\left[ m,M \right]$, then  
	\begin{equation}\label{40}
	f\left( \left\langle Ax,x \right\rangle \right)\le \left\langle f\left( A \right)x,x \right\rangle ,
	\end{equation}
	for every unit vector $x\in \mathcal{H}$.
\end{theorem}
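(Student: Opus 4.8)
The plan is to reduce the operator statement to a one-dimensional convexity fact by exploiting a supporting line at the scalar $c:=\left\langle Ax,x\right\rangle$, and then to transport that scalar inequality to operators through the spectral calculus. First I would observe that, since $Sp\left( A \right)\subseteq \left[ m,M \right]$ is equivalent to $mI\le A\le MI$, pairing against the unit vector $x$ gives $m\le \left\langle Ax,x\right\rangle \le M$; hence $c\in\left[ m,M \right]$ and the quantity $f\left( c \right)$ on the left-hand side of \eqref{40} is well defined.

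Next I would invoke the supporting line property of convex functions: at the point $c\in\left[ m,M \right]$ there exists a slope $\alpha\in\mathbb{R}$ (any element of the subdifferential, for instance a one-sided derivative should $c$ be an endpoint) such that
\begin{equation*}
f\left( t \right)\ge f\left( c \right)+\alpha\left( t-c \right)\qquad\text{for every }t\in\left[ m,M \right].
\end{equation*}
This is precisely where the convexity hypothesis enters in full.

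The central step is to lift this scalar inequality to an operator inequality. Because $Sp\left( A \right)\subseteq\left[ m,M \right]$ and the displayed estimate holds pointwise on $\left[ m,M \right]$, monotonicity of the continuous functional calculus (if $g\le h$ on $Sp\left( A \right)$ then $g\left( A \right)\le h\left( A \right)$, a consequence of the spectral theorem) yields
\begin{equation*}
f\left( A \right)\ge f\left( c \right)I+\alpha\left( A-cI \right)
\end{equation*}
in the operator order. Pairing both sides with the unit vector $x$ and using $\left\langle x,x\right\rangle =1$ together with $\left\langle Ax,x\right\rangle =c$ collapses the right-hand side to $f\left( c \right)+\alpha\left( c-c \right)=f\left( c \right)$, which is exactly $\left\langle f\left( A \right)x,x\right\rangle \ge f\left( \left\langle Ax,x\right\rangle \right)$.

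I expect the only delicate points to be the following. First, the existence of the supporting slope when $c$ coincides with an endpoint $m$ or $M$, which is handled by passing to the appropriate one-sided derivative (finite because $f$ is convex on $\left[ m,M \right]$). Second, and more substantively, the passage from a pointwise scalar inequality on the spectrum to an inequality in the operator order: this is the technical heart of the argument and rests on the spectral decomposition of the self-adjoint operator $A$. Everything else is a routine simplification of inner products.
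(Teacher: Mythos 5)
The paper does not actually prove this theorem: it is quoted as a known result of Mond and Pe\v{c}ari\'{c} \cite{7}, so there is no in-paper argument to compare against. Your proof is the standard supporting-line (subgradient) argument used throughout the cited literature: reduce to the scalar $c=\left\langle Ax,x\right\rangle\in\left[m,M\right]$, minorize $f$ by an affine function touching it at $c$, push the pointwise inequality on $Sp\left(A\right)$ through the functional calculus, and pair with the unit vector $x$. That outline is sound and every step you describe is correct. The one inaccuracy is your parenthetical claim that the one-sided derivative at an endpoint is finite because $f$ is convex on a closed interval: this is false in general (for $f\left(t\right)=-\sqrt{t}$ on $\left[0,1\right]$ one has $f'_{+}\left(0\right)=-\infty$, and no supporting line exists at $0$). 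Two standard repairs: (i) if $c=m$, then $\left\langle \left(A-mI\right)x,x\right\rangle=0$ with $A-mI\ge 0$ forces $\left(A-mI\right)^{1/2}x=0$, hence $Ax=mx$, so $\left\langle f\left(A\right)x,x\right\rangle=f\left(m\right)$ and \eqref{40} holds with equality (similarly for $c=M$); or (ii) replace the single supporting line by the family of all affine minorants $\ell\le f$ on $\left[m,M\right]$, each of which yields $\ell\left(\left\langle Ax,x\right\rangle\right)\le\left\langle f\left(A\right)x,x\right\rangle$, and then take the supremum, which recovers $f\left(c\right)$ for continuous convex $f$ even at the endpoints. With either repair your argument is a complete and correct proof of the quoted theorem.
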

Over the years, various extensions and generalizations of \eqref{40} have been
obtained in the literature, e.g., \cite{Horvath, kian, moradi}. For this background we refer to any expository text such as \cite{1}.

The aim of this paper is to find an inequality which contains \eqref{40} as a special case. Our result also allows to obtain a refinement and a reverse for the scalar Young inequality. More precisely, it will be shown that for two non-negative numbers $a,b$ we have
\begin{equation*}
\begin{aligned}
{{K}^{r}}\left( h,2 \right)\exp \left( \left( \frac{v\left( 1-v \right)}{2}-\frac{r}{4} \right){{\left( \frac{a-b}{D} \right)}^{2}} \right)&\le \frac{a{{\nabla }_{v}}b}{a{{\sharp}_{v}}b} \\ 
& \le {{K}^{R}}\left( h,2 \right)\exp \left( \left( \frac{v\left( 1-v \right)}{2}-\frac{R}{4} \right){{\left( \frac{a-b}{D} \right)}^{2}} \right), 
\end{aligned}
\end{equation*}
where $r=\min \left\{ v,1-v \right\}$, $R=\max \left\{ v,1-v \right\}$, $D=\max \left\{ a,b \right\}$ and $K(h,2) = \frac{(h+1)^2}{4h}$ is the Kantorovich constant with $h = \frac{b}{a}$. 

To make the text more self-contained we give a brief overview of convexifiable functions. Given a continuous $f:I\to \mathbb{R}$ defined on the compact interval $I\subset \mathbb{R}$, consider a function $\varphi :I\times \mathbb{R}\to \mathbb{R}$ defined by $\varphi \left( x,\alpha  \right)=f\left( x \right)-\frac{1}{2}\alpha {{x}^{2}}$. If $\varphi \left( x,\alpha  \right)$ is a convex function on $I$ for some $\alpha ={{\alpha }^{*}}$, then $\varphi \left( x,\alpha  \right)$ is called a convexification of $f$ and ${{\alpha }^{*}}$ a convexifier on $I$. A function $f$ is convexifiable if it has a convexification.
It is noted in \cite[Corollary 2.9]{2} that if the continuously differentiable function $f$ has Lipschitz derivative (i.e., $\left| f'\left( x \right)-f'\left( y \right) \right|\le L\left| x-y \right|$ for any $x,y\in I$ and some constant $L$), then $\alpha =-L$ is a convexifier of $f$. 

The following fact concerning convexifiable functions plays an  important role in our discussion (see {{\cite[Corollary 2.8]{2}}}):
\begin{equation}\label{p} \tag{\bf P}
\text{If }f\text{ is twice continuously differentiable, then }\alpha =\underset{t\in I}{\mathop{\min }}\,f''\left( t \right)\text{ is a convexifier of $f$}.
\end{equation}
The reader may consult \cite{10} for additional information about this topic. For all
other notions used in the paper, we refer the reader to the monograph \cite{1}.
\section{\bf Main Results}
After the above preparation, we are ready to prove the analogue of \eqref{40} for non-convex functions. 
\begin{theorem}\label{6}
	{\upshape(Jensen's operator inequality for non-convex functions).} Let $f$ be a continuous convexifiable function on the interval $I$ and $\alpha $ a convexifier of $f$. Then 
	\begin{equation}\label{1}
	f\left( \left\langle Ax,x \right\rangle  \right)\le \left\langle f\left( A \right)x,x \right\rangle -\frac{1}{2}\alpha \left( \left\langle {{A}^{2}}x,x \right\rangle -{{\left\langle Ax,x \right\rangle }^{2}} \right),
	\end{equation}
	for every self-adjoint operator $A$ with $Sp\left( A \right)\subseteq I$ and every unit vector $x\in \mathcal{H}$.
\end{theorem}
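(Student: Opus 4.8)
The plan is to reduce the statement to the classical Jensen operator inequality \eqref{40} by exploiting the convexification directly. Set
\[
g\left( t \right)=\varphi \left( t,\alpha  \right)=f\left( t \right)-\tfrac{1}{2}\alpha {{t}^{2}}.
\]
By the very definition of a convexifier, $g$ is a continuous \emph{convex} function on $I$. Since $Sp\left( A \right)\subseteq I$, the hypotheses of \eqref{40} are met for $g$, and I would apply that theorem to $g$ in place of $f$.

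Applying \eqref{40} to the convex function $g$ gives, for every unit vector $x$,
\[
g\left( \left\langle Ax,x \right\rangle  \right)\le \left\langle g\left( A \right)x,x \right\rangle .
\]
The next step is to unfold both sides. On the left, $g\left( \left\langle Ax,x \right\rangle  \right)=f\left( \left\langle Ax,x \right\rangle  \right)-\tfrac{1}{2}\alpha {{\left\langle Ax,x \right\rangle }^{2}}$. On the right, because the continuous functional calculus is linear, one has $g\left( A \right)=f\left( A \right)-\tfrac{1}{2}\alpha {{A}^{2}}$, and then linearity of the inner product yields $\left\langle g\left( A \right)x,x \right\rangle =\left\langle f\left( A \right)x,x \right\rangle -\tfrac{1}{2}\alpha \left\langle {{A}^{2}}x,x \right\rangle$.

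Substituting these expressions and rearranging to isolate $f\left( \left\langle Ax,x \right\rangle  \right)$ transfers the quadratic term $\tfrac{1}{2}\alpha {{\left\langle Ax,x \right\rangle }^{2}}$ to the right-hand side, producing exactly
\[
f\left( \left\langle Ax,x \right\rangle  \right)\le \left\langle f\left( A \right)x,x \right\rangle -\tfrac{1}{2}\alpha \left( \left\langle {{A}^{2}}x,x \right\rangle -{{\left\langle Ax,x \right\rangle }^{2}} \right),
\]
which is \eqref{1}. I expect no genuine obstacle here: the entire content is the reduction via $g$, and the only points requiring a word of care are the linearity of the functional calculus used to write $g\left( A \right)=f\left( A \right)-\tfrac{1}{2}\alpha {{A}^{2}}$ and the observation that the ``variance'' term $\left\langle {{A}^{2}}x,x \right\rangle -{{\left\langle Ax,x \right\rangle }^{2}}$ is nonnegative by Cauchy--Schwarz, which confirms that \eqref{1} is indeed a refinement recovering \eqref{40} when $\alpha =0$.
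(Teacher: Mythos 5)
Your proof is correct and follows exactly the paper's own argument: both apply the classical Jensen operator inequality \eqref{40} to the convexification $g_\alpha(t)=f(t)-\tfrac{1}{2}\alpha t^2$ and then unwind via linearity of the functional calculus. You simply spell out the ``equivalent to the desired inequality'' step that the paper leaves implicit.
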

\begin{proof}
	The idea of proof evolves from the approach in \cite{3}. Let  ${{g}_{\alpha }}:I\to \mathbb{R}$ with ${{g}_{\alpha }}\left( x \right)=\varphi \left( x,\alpha  \right)$.  According to the assumption, ${{g}_{\alpha }}\left( x \right)$ is convex. Therefore
	\begin{equation*}
	{{g}_{\alpha }}\left( \left\langle Ax,x \right\rangle  \right)\le \left\langle {{g}_{\alpha }}\left( A \right)x,x \right\rangle ,
	\end{equation*}
	for every unit vector $x\in \mathcal{H}$.  This expression is equivalent to the desired inequality \eqref{1}.
\end{proof}

\medskip

A few remarks concerning Theorem \ref{6} are in order.
\begin{remark}
	\hfill
	\begin{itemize}
		\item[(a)] Using the fact that for a convex function $f$ one can choose the convexifier $\alpha =0$,  one recovers the inequality \eqref{40}.
		\item[(b)] For continuously differentiable function $f$ with Lipschitz derivative and Lipschitz constant $L$, we have
		
		\[f\left( \left\langle Ax,x \right\rangle  \right)\le \left\langle f\left( A \right)x,x \right\rangle +\frac{1}{2}L\left( \left\langle {{A}^{2}}x,x \right\rangle -{{\left\langle Ax,x \right\rangle }^{2}} \right).\] 
	\end{itemize}
\end{remark}

\medskip

An important special case of Theorem \ref{6}, which refines inequality \eqref{40} can be explicitly
stated using the property \eqref{p}.
\begin{remark}\label{7}
	Let $f:I\to \mathbb{R}$ be a twice continuously differentiable strictly convex function and $\alpha =\underset{t\in I}{\mathop{\min }}\,f''\left( t \right)$. Then
	\begin{equation}\label{45}
	f\left( \left\langle Ax,x \right\rangle  \right)\le \left\langle f\left( A \right)x,x \right\rangle -\frac{1}{2}\alpha \left( \left\langle {{A}^{2}}x,x \right\rangle -{{\left\langle Ax,x \right\rangle }^{2}} \right)\le \left\langle f\left( A \right)x,x \right\rangle ,
	\end{equation}
	for every positive operator $A$ with $Sp\left( A \right)\subseteq I$ and every unit vector $x\in \mathcal{H}$.
\end{remark}
The inequality \eqref{45} is obtained in the paper \cite[Theorem 3.3]{moradi} (where this result was derived for the strongly convex functions) with a different technique (see also \cite{dragomir1}).
\medskip

The proof of the following corollary is adapted from the one of {{\cite[Theorem 1.3]{1}}}, but we put a sketch of the proof for the reader.
\begin{corollary}\label{4}
	Let $f$ be a continuous convexifiable function on the interval $I$ and $\alpha $ a convexifier. Let ${{A}_{1}},\ldots ,{{A}_{n}}$ be self-adjoint operators on  $\mathcal{H}$ with $Sp\left( {{A}_{i}} \right)\subseteq I$ for $1\le i\le n$ and ${{x}_{1}},\ldots ,{{x}_{n}}\in \mathcal{H}$ be such that $\sum\nolimits_{i=1}^{n}{{{\left\| {{x}_{i}} \right\|}^{2}}}=1$. Then
	\begin{equation}\label{5}
	f\left( \sum\limits_{i=1}^{n}{\left\langle {{A}_{i}}{{x}_{i}},{{x}_{i}} \right\rangle } \right)\le \sum\limits_{i=1}^{n}{\left\langle f\left( {{A}_{i}} \right){{x}_{i}},{{x}_{i}} \right\rangle }-\frac{1}{2}\alpha \left( \sum\limits_{i=1}^{n}{\left\langle A_{i}^{2}{{x}_{i}},{{x}_{i}} \right\rangle }-{{\left( \sum\limits_{i=1}^{n}{\left\langle {{A}_{i}}{{x}_{i}},{{x}_{i}} \right\rangle } \right)}^{2}} \right).
	\end{equation} 
\end{corollary}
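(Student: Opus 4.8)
The plan is to reduce the multi-operator statement \eqref{5} to the single-operator Theorem \ref{6} by means of the standard block-diagonal (direct sum) construction. The key point is that a finite family of operators on $\mathcal{H}$, together with a family of vectors whose squared norms sum to $1$, can be repackaged as a single self-adjoint operator and a single \emph{unit} vector on the amplified Hilbert space.

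Concretely, I would set $\mathcal{K}=\bigoplus_{i=1}^{n}\mathcal{H}$ (the orthogonal direct sum of $n$ copies of $\mathcal{H}$), define the block-diagonal operator $A=\bigoplus_{i=1}^{n}A_i$ acting on $\mathcal{K}$, and form the single vector $x=\left( x_1,\ldots ,x_n \right)\in\mathcal{K}$. The normalization hypothesis $\sum_{i=1}^{n}\left\| x_i \right\|^2=1$ is exactly the statement that $x$ is a unit vector in $\mathcal{K}$. Since each $A_i$ is self-adjoint, $A$ is self-adjoint, and because $Sp\left( A \right)=\bigcup_{i=1}^{n}Sp\left( A_i \right)\subseteq I$, the operator $A$ satisfies the hypotheses of Theorem \ref{6}. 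Applying that theorem to this $A$ and $x$ yields
\[
f\left( \left\langle Ax,x \right\rangle \right)\le \left\langle f\left( A \right)x,x \right\rangle -\tfrac{1}{2}\alpha \left( \left\langle A^{2}x,x \right\rangle -\left\langle Ax,x \right\rangle ^{2} \right).
\]

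The remaining work is purely bookkeeping: translating each inner product on $\mathcal{K}$ back into the sums appearing in \eqref{5}. Here the one fact worth isolating is that the continuous functional calculus respects orthogonal direct sums, so that $f\left( A \right)=\bigoplus_{i=1}^{n}f\left( A_i \right)$ and likewise $A^{2}=\bigoplus_{i=1}^{n}A_i^{2}$; this is the only step with any content, and it is routine since $A$ is block-diagonal and the spectral projections split accordingly. Granting this, the definition of the inner product on a direct sum gives $\left\langle Ax,x \right\rangle =\sum_{i=1}^{n}\left\langle A_i x_i,x_i \right\rangle$, $\left\langle f\left( A \right)x,x \right\rangle =\sum_{i=1}^{n}\left\langle f\left( A_i \right)x_i,x_i \right\rangle$, and $\left\langle A^{2}x,x \right\rangle =\sum_{i=1}^{n}\left\langle A_i^{2}x_i,x_i \right\rangle$. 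Substituting these three identities into the displayed inequality produces \eqref{5} verbatim.

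I expect no genuine obstacle: the construction is the classical device used to pass from Jensen's operator inequality for one operator to the version with several operators (as in the cited \cite[Theorem 1.3]{1}), and the convexifier $\alpha$ simply rides along as a fixed scalar throughout. The only thing one must check carefully, rather than an actual difficulty, is that the spectral condition and the normalization transfer correctly to the amplified space, and that the functional calculus commutes with the direct sum; both are standard.
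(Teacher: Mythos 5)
Your proof is correct and follows exactly the paper's own argument: the paper likewise forms the unit vector $\mathrm{x}=(x_1,\ldots,x_n)$ in $\mathcal{H}^n$, the block-diagonal operator $\mathrm{A}$ with blocks $A_i$, verifies the same three inner-product identities, and applies Theorem \ref{6}. No differences worth noting.
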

\begin{proof}
In fact, $\mathrm{x}:=\left( \begin{matrix}
{{x}_{1}}  \\
\vdots   \\
{{x}_{n}}  \\
\end{matrix} \right)$ is a unit vector in the Hilbert space ${\mathcal{H}^n}$. If  we introduce the \lq\lq diagonal'' operator on ${\mathcal{H}^n}$
\[\mathrm{A}:=\left( \begin{matrix}
{{A}_{1}} & \cdots  & 0  \\
\vdots  & \ddots  & \vdots   \\
0 & \cdots  & {{A}_{n}}  \\
\end{matrix} \right),\] 
then, obviously, $Sp\left( \mathrm{A} \right)\subseteq I$, $\left\| \mathrm{x} \right\|=1$, $\left\langle f\left( \mathrm{A} \right)\mathrm{x},\mathrm{x} \right\rangle =\sum\nolimits_{i=1}^{n}{\left\langle f\left( {{A}_{i}} \right){{x}_{i}},{{x}_{i}} \right\rangle }$, $\left\langle \mathrm{A}\mathrm{x},\mathrm{x} \right\rangle =\sum\nolimits_{i=1}^{n}{\left\langle {{A}_{i}}{{x}_{i}},{{x}_{i}} \right\rangle }$, $\left\langle {{\mathrm{A}}^{2}}\mathrm{x},\mathrm{x} \right\rangle =\sum\nolimits_{i=1}^{n}{\left\langle A_{i}^{2}{{x}_{i}},{{x}_{i}} \right\rangle }$. Hence, to complete the proof, it is enough to apply Theorem \ref{6} for $\mathrm{A}$ and $\mathrm{x}$.
\end{proof}

\medskip

Corollary \ref{4} leads us to the following result. The argument depends on an idea of {{\cite[Corollary 1]{agarwal}}}.
\begin{corollary}\label{8}
	Let $f$ be a continuous convexifiable function on the interval $I$ and $\alpha $ a convexifier. Let ${{A}_{1}},\ldots ,{{A}_{n}}$ be self-adjoint operators on  $\mathcal{H}$ with $Sp\left( {{A}_{i}} \right)\subseteq I$ for $1\le i\le n$ and let ${{p}_{1}},\ldots ,{{p}_{n}}$ be positive scalars such that $\sum\nolimits_{i=1}^{n}{{{p}_{i}}}=1$. Then
	\begin{equation}\label{2}
	f\left( \sum\limits_{i=1}^{n}{\left\langle {{p}_{i}}{{A}_{i}}x,x \right\rangle } \right)\le \sum\limits_{i=1}^{n}{\left\langle {{p}_{i}}f\left( {{A}_{i}} \right)x,x \right\rangle }-\frac{1}{2}\alpha \left( \sum\limits_{i=1}^{n}{\left\langle {{p}_{i}}A_{i}^{2}x,x \right\rangle }-{{\left( \sum\limits_{i=1}^{n}{\left\langle {{p}_{i}}{{A}_{i}}x,x \right\rangle } \right)}^{2}} \right),
	\end{equation}
	for every unit vector $x\in \mathcal{H}$.
\end{corollary}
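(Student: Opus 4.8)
The plan is to deduce Corollary \ref{8} from Corollary \ref{4} by choosing the vectors $x_1,\ldots,x_n$ so that the scalar weights $p_i$ get absorbed into the normalization $\sum_{i=1}^n \|x_i\|^2 = 1$. Concretely, given a unit vector $x \in \mathcal{H}$ and positive scalars $p_i$ with $\sum_{i=1}^n p_i = 1$, I would set $x_i := \sqrt{p_i}\,x$ for each $i$. Then $\|x_i\|^2 = p_i \|x\|^2 = p_i$, so that $\sum_{i=1}^n \|x_i\|^2 = \sum_{i=1}^n p_i = 1$, which is exactly the hypothesis needed to apply Corollary \ref{4} to these $x_i$ together with the given operators $A_i$ (whose spectra already lie in $I$).

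The next step is to express every inner product occurring in \eqref{5} in terms of $x$ and $p_i$. Since the factor $\sqrt{p_i}$ comes out of both entries of the inner product, one has $\langle A_i x_i, x_i\rangle = p_i\langle A_i x, x\rangle = \langle p_i A_i x, x\rangle$, and likewise $\langle f(A_i) x_i, x_i\rangle = \langle p_i f(A_i) x, x\rangle$ and $\langle A_i^2 x_i, x_i\rangle = \langle p_i A_i^2 x, x\rangle$. Substituting these identities into \eqref{5} replaces each sum $\sum_i \langle\,\cdot\,x_i, x_i\rangle$ by the corresponding weighted sum appearing in \eqref{2}, and turns the quadratic term $\big(\sum_i \langle A_i x_i, x_i\rangle\big)^2$ into $\big(\sum_i \langle p_i A_i x, x\rangle\big)^2$. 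After these replacements, inequality \eqref{5} reads verbatim as the desired inequality \eqref{2}.

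Because each step is a direct substitution followed by a single appeal to Corollary \ref{4}, I do not expect any genuine analytic obstacle; the only points requiring care are verifying that the chosen vectors meet the normalization condition and applying the three inner-product identities consistently across all terms. In spirit this is the operator counterpart of the classical device of \cite[Corollary 1]{agarwal} for passing from a weighted Jensen-type inequality involving several states on a single vector to the diagonal form already established for several vectors.
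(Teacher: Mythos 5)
Your proposal is correct and coincides with the paper's own proof: the paper likewise sets $x_i=\sqrt{p_i}\,x$, notes that $\sum_{i=1}^{n}\|x_i\|^2=1$, and applies Corollary \ref{4}. Your write-up merely spells out the inner-product identities that the paper leaves implicit.
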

\begin{proof}
	Suppose that $x\in \mathcal{H}$ is a unit vector. Putting ${{x}_{i}}=\sqrt{{{p}_{i}}}x\in \mathcal{H}$ so that $\sum\nolimits_{i=1}^{n}{{{\left\| {{x}_{i}} \right\|}^{2}}}=1$ and applying Corollary \ref{4} we obtain the desired result \eqref{2}.
\end{proof}

\medskip

The clear advantage of our approach over the Jensen operator inequality is shown in the following example. Before proceeding we recall the following multiple operator version of Jensen's inequality \cite[Corollary 1]{agarwal}: Let $f:\left[ m,M \right]\subseteq \mathbb{R}\to \mathbb{R}$ be a convex function and ${{A}_{i}}$ be self-adjoint operators with $Sp\left( {{A}_{i}} \right)\subseteq \left[ m,M \right]$, $i=1,\ldots ,n$ for some scalars $m<M$. If ${{p}_{i}}\ge 0$, $i=1,\ldots ,n$ with $\sum\nolimits_{i=1}^{n}{{{p}_{i}}}=1$, then 
\begin{equation}\label{41}
f\left( \sum\limits_{i=1}^{n}{\left\langle {{p}_{i}}{{A}_{i}}x,x \right\rangle } \right)\le \sum\limits_{i=1}^{n}{\left\langle {{p}_{i}}f\left( {{A}_{i}} \right)x,x \right\rangle },
\end{equation}
for every $x\in \mathcal{H}$ with $\left\| x \right\|=1$.
\begin{example}
	We use the same idea from \cite[Illustration 1]{3}. Let $f\left( t \right)=\sin t\text{ }\left( 0\le t\le 2\pi  \right)$, $\alpha =\underset{0\le t\le 2\pi }{\mathop{\min }}\,f''\left( t \right)=-1$, $n=2$, ${{p}_{1}}=p$, ${{p}_{2}}=1-p$, $\mathcal{H}={{\mathbb{R}}^{2}}$, ${{A}_{1}}=\left( \begin{matrix}
	2\pi  & 0  \\
	0 & 0  \\
	\end{matrix} \right)$, ${{A}_{2}}=\left( \begin{matrix}
	0 & 0  \\
	0 & 2\pi   \\
	\end{matrix} \right)$
	and $x=\left( \begin{matrix}
	0  \\
	1  \\
	\end{matrix} \right)$. After simple calculations (thanks to the continuous functional calculus), from  \eqref{2} we infer that
	\begin{equation}\label{44}
	\sin \left( 2\pi \left( 1-p \right) \right)\le 2{{\pi }^{2}}p\left( 1-p \right),\qquad \text{ }0\le p\le 1
	\end{equation}
	and \eqref{41} implies 
	\begin{equation}\label{43}
	\sin \left( 2\pi \left( 1-p \right) \right)\le 0,\qquad \text{ }0\le p\le 1.
	\end{equation}
	Not so surprisingly, the inequality \eqref{43} can break down when $\frac{1}{2}\le p\le 1$ (i.e., \eqref{41} is not applicable here). However, the new upper bound in \eqref{44} holds.
\end{example}

\medskip

The weighted version of {{\cite[Theorem 3]{3}}} follows from Corollary \ref{8}, i.e.,
\begin{equation}\label{11}
f\left( \sum\limits_{i=1}^{n}{{{p}_{i}}{{t}_{i}}} \right)\le \sum\limits_{i=1}^{n}{{{p}_{i}}f\left( {{t}_{i}} \right)}-\frac{1}{2}\alpha \left( \sum\limits_{i=1}^{n}{{{p}_{i}}t_{i}^{2}}-{{\left( \sum\limits_{i=1}^{n}{{{p}_{i}}{{t}_{i}}} \right)}^{2}} \right),
\end{equation}
where ${{t}_{i}}\in I$ and $\sum\nolimits_{i=1}^{n}{{{p}_{i}}}=1$. For the case $n=2$, the inequality \eqref{11} reduces to
\begin{equation}\label{10}
f\left( \left( 1-v \right){{t}_{1}}+v{{t}_{2}} \right)\le \left( 1-v \right)f\left( {{t}_{1}} \right)+vf\left( {{t}_{2}} \right)-\frac{v\left( 1-v \right)}{2}\alpha {{\left( {{t}_{1}}-{{t}_{2}} \right)}^{2}},
\end{equation}
where $0\le v\le 1$. In particular
\begin{equation}\label{19}
f\left( \frac{{{t}_{1}}+{{t}_{2}}}{2} \right)\le \frac{f\left( {{t}_{1}} \right)+f\left( {{t}_{2}} \right)}{2}-\frac{1}{8}\alpha {{\left( {{t}_{1}}-{{t}_{2}} \right)}^{2}}.
\end{equation}
It is notable that Theorem \ref{6} is equivalent to the inequality (\ref{11}).
The following provides a refinement of the arithmetic-geometric mean inequality.
\begin{proposition}\label{proposition}
	For each $a,b>0$ and $0\le v\le 1$, we have
	\begin{equation}\label{14}
	\sqrt{ab}\le {{H}_{v}}\left( a,b \right)-\frac{d}{8}{{\left( \left( 1-2v \right)\left( \log \frac{a}{b} \right) \right)}^{2}}\le \frac{a+b}{2}-\frac{d}{8}{{\left( \log \frac{a}{b} \right)}^{2}}\le \frac{a+b}{2},
	\end{equation}
	where $d=\min \left\{ a,b \right\}$ and ${{H}_{v}}\left( a,b \right)= \frac{{{a}^{1-v}}{{b}^{v}}+{{b}^{1-v}}{{a}^{v}}}{2}$ is the Heinz mean.
\end{proposition}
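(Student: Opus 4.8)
The plan is to specialize the scalar inequalities \eqref{10} and \eqref{19} to the exponential function $f(t)=e^t$. First I would record that on any compact interval $I$ the function $f(t)=e^t$ is twice continuously differentiable with $f''(t)=e^t$ increasing, so by property \eqref{p} a convexifier is $\alpha=\min_{t\in I}e^t=e^{\min I}$. Taking $I=[\min\{\log a,\log b\},\max\{\log a,\log b\}]$ then yields $\alpha=\min\{a,b\}=d$, which is exactly the constant occurring throughout \eqref{14}. All three estimates will be driven by this single choice of $f$ and $\alpha$.

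For the leftmost inequality I would apply \eqref{19} with $t_1=(1-v)\log a+v\log b$ and $t_2=v\log a+(1-v)\log b$, both of which lie in $I$ as convex combinations of $\log a,\log b$. A direct computation gives $e^{t_1}=a^{1-v}b^{v}$, $e^{t_2}=a^{v}b^{1-v}$, $\tfrac{t_1+t_2}{2}=\log\sqrt{ab}$ and $t_1-t_2=(1-2v)\log\frac{a}{b}$, so that \eqref{19} reads precisely $\sqrt{ab}\le H_v(a,b)-\frac{d}{8}\bigl((1-2v)\log\frac{a}{b}\bigr)^2$. The consistency check at $v=\tfrac12$ (where $H_{1/2}(a,b)=\sqrt{ab}$ and $1-2v=0$) confirms the correction term is the right one.

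For the middle inequality I would instead use the weighted form \eqref{10} with $f(t)=e^t$, applied twice: once with $(t_1,t_2)=(\log a,\log b)$, giving $a^{1-v}b^{v}\le(1-v)a+vb-\frac{v(1-v)}{2}d(\log\frac{a}{b})^2$, and once with the arguments exchanged, giving the companion bound for $a^{v}b^{1-v}$. Adding these and dividing by two produces $H_v(a,b)\le\frac{a+b}{2}-\frac{v(1-v)}{2}d(\log\frac{a}{b})^2$. Substituting this into the left-hand side and using the elementary identity $4v(1-v)+(1-2v)^2=1$ collapses the two correction terms into $-\frac{d}{8}(\log\frac{a}{b})^2$, which is exactly the middle bound. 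The rightmost inequality is immediate since the subtracted term $\frac{d}{8}(\log\frac{a}{b})^2$ is non-negative.

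The computations are routine once the substitution $f(t)=e^t$ and the two choices of arguments are fixed; the only points demanding care are the identification of the convexifier and the final algebra. Since the relevant quantity is $\min_{t\in I}f''(t)$, one must use the monotonicity of $f''(t)=e^t$ to place the minimum at $\min\{\log a,\log b\}$, yielding $d=\min\{a,b\}$ uniformly for all three estimates. I expect this bookkeeping of the convexifier, together with the identity linking $(1-2v)^2$ and $4v(1-v)$, to be the only steps where an error could creep in.
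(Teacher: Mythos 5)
Your proof is correct and follows essentially the same route as the paper: both derive the first inequality from \eqref{19} applied at the pair $(1-v)\log a+v\log b$ and $v\log a+(1-v)\log b$, the second from two applications of \eqref{10} together with the identity $4v(1-v)+(1-2v)^2=1$, and both use $f(t)=e^t$ with convexifier $\alpha=\min\{a,b\}=d$ via \eqref{p}. Your version is in fact slightly more careful than the paper's, which writes the interval as $[a,b]$ where it really means the interval between $\log a$ and $\log b$.
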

\begin{proof}
	Assume that $f$ is a twice differentiable convex function such that $\alpha \le f''$ where $\alpha \in \mathbb{R}$. Under these conditions, it follows that
	\begin{equation*}
	\begin{aligned}
	f\left( \frac{a+b}{2} \right)&=f\left( \frac{\left( 1-v \right)a+vb+\left( 1-v \right)b+va}{2} \right) \\ 
	& \le \frac{f\left( \left( 1-v \right)a+vb \right)+f\left( \left( 1-v \right)b+va \right)}{2}-\frac{1}{8}\alpha {{\left( \left( a-b \right)\left( 1-2v \right) \right)}^{2}} \quad \text{(by \eqref{19})}\\ 
	& \le \frac{f\left( a \right)+f\left( b \right)}{2}-\frac{1}{8}\alpha {{\left( a-b \right)}^{2}} \quad \text{(by \eqref{10})}\\ 
	& \le \frac{f\left( a \right)+f\left( b \right)}{2},  
	\end{aligned}
	\end{equation*}
	for $\alpha \ge 0$. Now taking $f\left( t \right)={{e}^{t}}$ with $t \in I=\left[ a,b \right]$ in the above inequalities, we deduce the desired inequality \eqref{14}.
\end{proof}
\begin{remark}
	As Bhatia pointed out in \cite{bhatia}, the Heinz means interpolate between the geometric mean and the arithmetic mean, i.e.,
	\begin{equation}\label{15}
	\sqrt{ab}\le {{H}_{v}}\left( a,b \right)\le \frac{a+b}{2}.
	\end{equation}
	Of course, the first inequality in \eqref{14} yields an improvement of \eqref{15}. The inequalities in \eqref{14} also sharpens up the following inequality which is due to Dragomir (see \cite[Remark 1]{Dragomir}):
	\[\frac{d}{8}{{\left( \log \frac{a}{b} \right)}^{2}}\le \frac{a+b}{2}-\sqrt{ab}.\]
\end{remark}

\medskip

Studying about the arithmetic-geometric mean inequality, we cannot avoid mentioning its cousin, the Young inequality. The following inequalities provides a multiplicative type refinement and reverse of the Young's inequality:  
\begin{equation}\label{young}
{{K}^{r}}\left( h,2 \right)\le \frac{\left( 1-v \right)a+vb}{{{a}^{1-v}}{{b}^{v}}}\le {{K}^{R}}\left( h,2 \right),
\end{equation}
where $0\le v\le 1$, $r=\min \left\{ v,1-v \right\}$, $R=\max \left\{ v,1-v \right\}$ and $K(h,2) = \frac{(h+1)^2}{4h}$ with $h = \frac{b}{a}$. The first one was proved by Zuo et al. {{\cite[Corollary 3]{fujji}}}, while the second one was given by Liao et al. {{\cite[Corollary 2.2]{liao}}}.
	
Our aim in the following is to establish a refinement for the inequalities in \eqref{young}. The crucial role for our purposes will play the following facts:

If $f$ is a convex function on the fixed closed interval $I$, then   
\begin{equation}\label{16}
n\lambda \left\{ \sum\limits_{i=1}^{n}{\frac{1}{n}f\left( {{x}_{i}} \right)-f\left( \sum\limits_{i=1}^{n}{\frac{1}{n}{{x}_{i}}} \right)} \right\}\le \sum\limits_{i=1}^{n}{{{p}_{i}}f\left( {{x}_{i}} \right)}-f\left( \sum\limits_{i=1}^{n}{{{p}_{i}}{{x}_{i}}} \right),
\end{equation}
\begin{equation}\label{17}
\sum\limits_{i=1}^{n}{{{p}_{i}}f\left( {{x}_{i}} \right)}-f\left( \sum\limits_{i=1}^{n}{{{p}_{i}}{{x}_{i}}} \right)\le n\mu \left\{ \sum\limits_{i=1}^{n}{\frac{1}{n}f\left( {{x}_{i}} \right)-f\left( \sum\limits_{i=1}^{n}{\frac{1}{n}{{x}_{i}}} \right)} \right\},
\end{equation}
where ${{p}_{1}},\ldots ,{{p}_{n}}\ge 0$ with $\sum\nolimits_{i=1}^{n}{{{p}_{i}}}=1$, $\lambda =\min \left\{ {{p}_{1}},\ldots ,{{p}_{n}} \right\}$, $\mu =\max \left\{ {{p}_{1}},\ldots ,{{p}_{n}} \right\}$. Notice that the first inequality goes back to Pe\v cari\'c et al. {{\cite[Theorem 1, P.717]{mit}}}, while the second one was obtained by Mitroi in {{\cite[Corollary 3.1]{mitroi}}}.

\medskip

Now we come to the announced theorem. In order to simplify the notations, we put $a{{\sharp}_{v}}b={{a}^{1-v}}{{b}^{v}}$ and $a{{\nabla }_{v}}b=\left( 1-v \right)a+vb$.
\begin{theorem}\label{b}
	Let $a,b>0$ and $0\le v\le 1$. Then
	\begin{equation}\label{18}
	\begin{aligned}
	{{K}^{r}}\left( h,2 \right)\exp \left( \left( \frac{v\left( 1-v \right)}{2}-\frac{r}{4} \right){{\left( \frac{a-b}{D} \right)}^{2}} \right)&\le \frac{a{{\nabla }_{v}}b}{a{{\sharp}_{v}}b} \\ 
	& \le {{K}^{R}}\left( h,2 \right)\exp \left( \left( \frac{v\left( 1-v \right)}{2}-\frac{R}{4} \right){{\left( \frac{a-b}{D} \right)}^{2}} \right), 
	\end{aligned}
	\end{equation}
where $r=\min \left\{ v,1-v \right\}$, $R=\max \left\{ v,1-v \right\}$, $D=\max \left\{ a,b \right\}$ and $K(h,2) = \frac{(h+1)^2}{4h}$ with $h = \frac{b}{a}$.    
\end{theorem}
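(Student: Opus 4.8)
The plan is to apply the two--weight refinements \eqref{16} and \eqref{17} not to a convex function directly, but to a \emph{convexification} of $f(t)=-\log t$. Working on $I=\left[ d,D \right]$ with $d=\min\{a,b\}$ and $D=\max\{a,b\}$, one has $f''(t)=1/t^{2}$, so by property \eqref{p} the number $\alpha=\min_{t\in I}f''(t)=1/D^{2}$ is a convexifier; that is, $g_{\alpha}(t)=-\log t-\frac{t^{2}}{2D^{2}}$ is convex on $I$. The first thing I would record is that the ordinary Jensen gap of $g_{\alpha}$ is exactly the residual of the refined inequality \eqref{11}: for weights $p_{i}$ and nodes $x_{i}\in I$,
\begin{equation*}
\sum_{i} p_{i} g_{\alpha}(x_{i}) - g_{\alpha}\!\left(\sum_{i} p_{i} x_{i}\right) = \left[\sum_{i} p_{i} f(x_{i}) - f\!\left(\sum_{i} p_{i} x_{i}\right)\right] - \frac{\alpha}{2}\left[\sum_{i} p_{i} x_{i}^{2} - \left(\sum_{i} p_{i} x_{i}\right)^{2}\right],
\end{equation*}
since the quadratic contributions assemble into precisely $-\tfrac{1}{2}\alpha$ times the variance.

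Next I would apply \eqref{16} and \eqref{17} to the convex function $g_{\alpha}$ with $n=2$, nodes $x_{1}=a,\ x_{2}=b$ and weights $p_{1}=1-v,\ p_{2}=v$, so that $\lambda=\min\{1-v,v\}=r$, $\mu=\max\{1-v,v\}=R$ and the equiweights are $\tfrac{1}{2},\tfrac{1}{2}$. Evaluating with $f=-\log$, the weighted $g_{\alpha}$--gap becomes $\log\frac{a\nabla_{v}b}{a\sharp_{v}b}-\frac{v(1-v)}{2}\left(\frac{a-b}{D}\right)^{2}$ (the Jensen gap of $-\log$ being $\log\frac{a\nabla_{v}b}{a\sharp_{v}b}$ and the variance being $v(1-v)(a-b)^{2}$), while the equiweight $g_{\alpha}$--gap becomes $\log\frac{(a+b)/2}{\sqrt{ab}}-\frac{1}{8}\left(\frac{a-b}{D}\right)^{2}$ (the variance at weights $\tfrac{1}{2},\tfrac{1}{2}$ being $\tfrac{1}{4}(a-b)^{2}$). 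I would also record the elementary identity $2\log\frac{(a+b)/2}{\sqrt{ab}}=\log\frac{(a+b)^{2}}{4ab}=\log K(h,2)$.

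Feeding these evaluations into the chain $2\lambda\{\text{equiweight gap}\}\le\{\text{weighted gap}\}\le 2\mu\{\text{equiweight gap}\}$ supplied by \eqref{16}--\eqref{17}, the equiweight variance term $-\frac{1}{8}\left(\frac{a-b}{D}\right)^{2}$ scales to $-\frac{r}{4}\left(\frac{a-b}{D}\right)^{2}$ on the left and to $-\frac{R}{4}\left(\frac{a-b}{D}\right)^{2}$ on the right, and transposing the weighted variance term yields
\begin{equation*}
r\log K(h,2) + \left(\frac{v(1-v)}{2} - \frac{r}{4}\right)\left(\frac{a-b}{D}\right)^{2} \le \log\frac{a\nabla_{v}b}{a\sharp_{v}b} \le R\log K(h,2) + \left(\frac{v(1-v)}{2} - \frac{R}{4}\right)\left(\frac{a-b}{D}\right)^{2}.
\end{equation*}
Exponentiating delivers \eqref{18}. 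The one genuinely non-obvious move---everything else being bookkeeping---is to run the Pe\v cari\'c--Mitroi refinements on the convexified function $g_{\alpha}$ rather than on $f$ itself: this is what simultaneously produces the Kantorovich factors $K^{r},K^{R}$ (out of the plain Jensen gaps) and the exponential corrections (out of the variance terms), and what dictates the exact combination $\tfrac{v(1-v)}{2}-\tfrac{r}{4}$.
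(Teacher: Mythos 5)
Your proposal is correct and follows essentially the same route as the paper: the authors likewise apply the Pe\v cari\'c--Mitroi bounds \eqref{16} and \eqref{17} to the convexified function $g_{\alpha}(x)=-\log x-\frac{\alpha}{2}x^{2}$ with $\alpha=1/D^{2}$, $n=2$, $x_{1}=a$, $x_{2}=b$, $p_{1}=1-v$, $p_{2}=v$, and then rearrange the Jensen gaps and variance terms exactly as you do. Your bookkeeping (the identity $2\log\frac{(a+b)/2}{\sqrt{ab}}=\log K(h,2)$ and the variance evaluations) checks out, and your choice of $I=[d,D]$ is in fact slightly more careful than the paper's $I=[a,b]$.
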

\begin{proof}
	Employing the inequality \eqref{16}  for the twice differentiable convex function $f$ with $\alpha \le f''$, we have
\[\begin{aligned}
& n\lambda \left\{ \frac{1}{n}\sum\limits_{i=1}^{n}{f\left( {{x}_{i}} \right)}-f\left( \frac{1}{n}\sum\limits_{i=1}^{n}{{{x}_{i}}} \right) \right\}-\sum\limits_{i=1}^{n}{{{p}_{i}}f\left( {{x}_{i}} \right)}+f\left( \sum\limits_{i=1}^{n}{{{p}_{i}}{{x}_{i}}} \right) \\ 
& \le \frac{\alpha }{2}\left\{ n\lambda \left[  \frac{1}{n}\sum\limits_{i=1}^{n}{x_{i}^{2}}-{{\left( \frac{1}{n}\sum\limits_{i=1}^{n}{{{x}_{i}}} \right)}^{2}} \right]-\left( \sum\limits_{i=1}^{n}{{{p}_{i}}x_{i}^{2}}-{{\left( \sum\limits_{i=1}^{n}{{{p}_{i}}{{x}_{i}}} \right)}^{2}} \right) \right\}. \\ 
\end{aligned}\]
	Here we set $n=2$,  $x_1=a$, $x_2=b$, $p_1=1-v$, $p_2=v$, $\lambda =r$ and $f(x) =-\log x$ with $I=[a,b]$ (so $\alpha =\underset{x\in I}{\mathop{\min }}\,f''\left( x \right)=\frac{1}{{{D}^{2}}}$). Thus we deduce the first inequality in \eqref{18}.
	The second  inequality in \eqref{18} is also obtained similarly by using the inequality \eqref{17}.
\end{proof}
\begin{remark}
	\hfill
	\begin{itemize}
		\item[(a)] Since $\frac{v(1-v)}{2}-\frac{r}{4}\ge 0$ for each $0\le v\le 1$, we have $\exp \left( {\left( {\frac{{v\left( {1 - v} \right)}}{2} - \frac{r}{4}} \right){{\left( {\frac{{a - b}}{D}} \right)}^2}} \right)\ge 1$. Therefore the first inequality in \eqref{18} provides an improvement for the first inequality in \eqref{young}.
		\vskip 0.2 true cm
		\item[(b)] Since $\frac{v(1-v)}{2}-\frac{R}{4}\le 0$ for each $0\le v\le 1$, we get $\exp \left( {\left( {\frac{{v\left( {1 - v} \right)}}{2} - \frac{R}{4}} \right){{\left( {\frac{{a - b}}{D}} \right)}^2}} \right)\le 1$. Therefore the second inequality in \eqref{18} provides an improvement for the second inequality in \eqref{young}.
	\end{itemize}
\end{remark}
\begin{proposition} \label{relation_lower_dragomir}
	Under the same assumptions in Theorem \ref{b}, we have
	$$
	\frac{(h+1)^2}{4h} \geq \exp\left( \frac{1}{4}\left(\frac{a-b}{D}\right)^2\right).
	$$
\end{proposition}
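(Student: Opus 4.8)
The plan is to reduce the two-variable inequality to a one-variable statement and then settle it by elementary calculus. First I would observe that the claimed inequality is symmetric under the interchange $a\leftrightarrow b$: the Kantorovich constant $K(h,2)=\frac{(h+1)^2}{4h}$ is invariant under $h\mapsto 1/h$ (hence under swapping $a$ and $b$), while $D=\max\{a,b\}$ and $(a-b)^2$ are manifestly symmetric. Consequently there is no loss of generality in assuming $a\ge b$, so that $h=\frac{b}{a}\in(0,1]$ and $D=a$. Under this normalization $\frac{a-b}{D}=1-h$, and the assertion collapses to the single-variable inequality
\begin{equation*}
\frac{(1+h)^2}{4h}\ge \exp\left(\frac{(1-h)^2}{4}\right),\qquad h\in(0,1].
\end{equation*}

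Next I would pass to logarithms and define
\begin{equation*}
g(h)=2\ln(1+h)-\ln(4h)-\frac{(1-h)^2}{4},\qquad h\in(0,1],
\end{equation*}
so that the target inequality becomes $g(h)\ge 0$. A direct differentiation, after combining $\frac{2}{1+h}-\frac{1}{h}=\frac{h-1}{h(1+h)}$ and using the factorization $2-h-h^2=-(h-1)(h+2)$, yields
\begin{equation*}
g'(h)=\frac{-(h-1)^2(h+2)}{2h(1+h)}\le 0\quad\text{for }h>0.
\end{equation*}
Thus $g$ is non-increasing on $(0,1]$. Since a short computation gives $g(1)=2\ln 2-\ln 4=0$, monotonicity forces $g(h)\ge g(1)=0$ for every $h\in(0,1]$, which is exactly what is needed.

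The step that requires the most care --- rather than being genuinely hard --- is the reduction itself. Note that the naive inequality $\frac{(1+h)^2}{4h}\ge\exp\!\big(\frac14(1-h)^2\big)$ is in fact \emph{false} for large $h$, since the exponential grows far faster than the rational function; indeed $g$ is decreasing and becomes negative for $h>1$. What saves the statement is that the correct normalizing denominator is $D=\max\{a,b\}$, which guarantees that the quantity $\big(\frac{a-b}{D}\big)^2=\big(1-\min\{a,b\}/\max\{a,b\}\big)^2$ always lies in $[0,1)$; it is precisely the symmetry reduction to $h\le 1$ that keeps us on the correct branch. Once this is recognized, the remaining monotonicity argument is routine, and one could equally substitute $t=\frac{|a-b|}{D}\in[0,1)$ to obtain the even more transparent form $\frac{(2-t)^2}{4(1-t)}\ge\exp(t^2/4)$, whose associated derivative $\frac{t^2(3-t)}{2(2-t)(1-t)}\ge 0$ gives the same conclusion.
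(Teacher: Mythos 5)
Your proof is correct and its core is identical to the paper's: your function $g(h)=2\ln(1+h)-\ln(4h)-\tfrac{1}{4}(1-h)^2$ is exactly the paper's $f_2$, and your factorization $g'(h)=-\frac{(h-1)^2(h+2)}{2h(1+h)}\le 0$ together with $g(1)=0$ is precisely their argument for the case $a\ge b$. The one genuine difference is that you dispose of the other case by observing that both sides of the inequality are invariant under $a\leftrightarrow b$ (equivalently $h\mapsto 1/h$ for the Kantorovich constant), so you may assume $h\le 1$; the paper instead treats $a\le b$ separately with a second function $f_1(h)=2\log(h+1)-\log h-2\log 2-\tfrac{1}{4}(h-1)^2/h^2$, shown to be increasing for $h\ge 1$. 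Your symmetry reduction is a legitimate and mildly cleaner shortcut, and your remark that the single-variable inequality fails for $h>1$ correctly pinpoints why either the reduction or the paper's case split is indispensable.
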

\begin{proof}
	We prove the case $a \leq b$, then $h \geq 1$. We set
	$
	f_1(h) \equiv 2\log(h+1)-\log h-2 \log 2 -\frac{1}{4}\frac{(h-1)^2}{h^2}.
	$ 
	It is quite easy to see that
	$
	f_1'(h)=\frac{(2h+1)(h-1)^2}{2h^3(h+1)} \geq 0,
	$
	so that $f_1(h) \geq f_1(1)=0$.
	For the case $a \geq b$, (then $0<h \leq 1$), we also set
	$
	f_2(h) \equiv  2\log(h+1)-\log h-2 \log 2 -\frac{1}{4}(h-1)^2.
	$
	By direct calculation
	$
	f_2'(h)=-\frac{(h-1)^2(h+2)}{2h(h+1)} \leq 0,
	$
	so that $f_2(h) \geq f_2(1) =0$. Thus the statement follows.
\end{proof}
\begin{remark}
	Dragomir obtained a refinement and reverse of Young's inequality in \cite[Theorem 3]{Dragomir} as:
	\begin{equation} \label{ineq_dragomir}
	\exp \left( {\frac{{v\left( {1 - v} \right)}}{2}{{\left( {\frac{{a - b}}{D}} \right)}^2}} \right) \le \frac{{a{\nabla _v}b}}{{a{\sharp _v}b}} \le \exp \left( {\frac{{v\left( {1 - v} \right)}}{2}{{\left( {\frac{{a - b}}{d}} \right)}^2}} \right),
	\end{equation}
	where $d = \min\{a,b\}$. From the following facts (a) and (b), we claim that our inequalities are non-trivial results.
	\begin{itemize}
		\item[(a)] From Proposition \ref{relation_lower_dragomir}, our lower bound in \eqref{18} is tighter than the one in (\ref{ineq_dragomir}).
		\item[(b)] Numerical computations show that there is no ordering between the right hand side in \eqref{18} and the one in the second inequality of (\ref{ineq_dragomir}) shown in \cite[Theorem 3]{Dragomir}. For example, if we take $a=2$, $b=1$ and $v=0.1$, then 
		$$
		{{K}^{R}}\left( h,2 \right)\exp \left( \left( \frac{v\left( 1-v \right)}{2}-\frac{R}{4} \right){{\left( \frac{a-b}{D} \right)}^{2}} \right)-\exp\left(\frac{v(1-v)}{2}\left(\frac{a-b}{d}\right)^2 \right)\simeq 0.0168761,
		$$
		whereas it approximately equals $-0.0436069$ when $a=2$, $b=1$ and $v=0.3$.
	\end{itemize}
\end{remark}

\medskip

We give a further remark in relation to comparisons with other inequalities.
\begin{remark}
	The following refined Young inequality and its reverse are known
	\begin{equation} \label{WZL_ineq}
	K^{r'}(\sqrt{t},2)t^v +r(1-\sqrt{t})^2 \leq (1-v) +v t \leq K^{R'}(\sqrt{t},2)t^v +r(1-\sqrt{t})^2,
	\end{equation}
	where $t>0$, $r'=\min\{2r,1-2r\}$ and $R'=\max\{2r,1-2r\}$. The first  and  second inequality were given in \cite[Lemma 2.1]{WZ2014} and in \cite[Theorem 2.1]{liao}, respectively.
	
	Numerical computations show that there is no ordering between our inequalities \eqref{18} and the above ones.
	Actually, if we take $v=0.45$ and $t=0.1$ (we set $t=\frac{b}{a}$ with $a \geq b$ in \eqref{18}), then 
	$$
	K^{R'}(\sqrt{t},2)t^v +r(1-\sqrt{t})^2 - t^v K^R(h,2) \exp\left( \left(\frac{v(1-v)}{2}-\frac{R}{4}\right)(1-t)^2 \right) \simeq 0.0363059,
	$$
	while it equals approximately $-0.0860004$ when $v=0.9$ and $t=0.1$. 
	
	Similarly, when $v=0.45$ and $t=0.1$ we get
	$$
	K^{r'}(\sqrt{t},2)t^v +r(1-\sqrt{t})^2- t^v K^r(h,2) \exp\left( \left(\frac{v(1-v)}{2}-\frac{r}{4}\right)(1-t)^2 \right) \simeq -0.0126828,
	$$
	while it equals approximately $0.037896$ when $v=0.9$ and $t=0.1$. 
\end{remark}

\medskip

Obviously, in the inequality \eqref{young}, we cannot replace ${{K}^{r}}\left( h,2 \right)$ by ${{K}^{R}}\left( h,2 \right)$, or vice versa. In this regard, we have the following theorem. The proof is almost the same as that of Theorem \ref{b} (it is enough to use the convexity of the function ${{g}_{\beta }}\left( x \right)=\frac{\beta }{2}{{x}^{2}}-f\left( x \right)$ where $\beta =\underset{x\in I}{\mathop{\max }}\,f''\left( x \right)$).  
\begin{theorem}\label{c}
Let all the assumptions of Theorem \ref{b} hold except that $d=\min \left\{ a,b \right\}$. Then
\[\begin{aligned}
{{K}^{R}}\left( h,2 \right)\exp \left( \left( \frac{v\left( 1-v \right)}{2}-\frac{R}{4} \right){{\left( \frac{a-b}{d} \right)}^{2}} \right)&\le \frac{a{{\nabla }_{v}}b}{a{{\sharp}_{v}}b} \\ 
& \le {{K}^{r}}\left( h,2 \right)\exp \left( \left( \frac{v\left( 1-v \right)}{2}-\frac{r}{4} \right){{\left( \frac{a-b}{d} \right)}^{2}} \right).  
\end{aligned}\]
 \end{theorem}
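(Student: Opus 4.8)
The plan is to imitate the proof of Theorem \ref{b}, but to draw the convexity from the opposite side. First I would record that for $f(x)=-\log x$ on the interval $I=[a,b]$ we have $f''(x)=1/x^2$, so that $\beta:=\max_{x\in I}f''(x)=1/d^2$, attained at the smaller endpoint $d=\min\{a,b\}$. Consequently $g_\beta(x)=\frac{\beta}{2}x^2-f(x)$ satisfies $g_\beta''=\beta-f''\ge 0$ and is therefore convex on $I$; this is exactly the function singled out in the statement, and it replaces the convex function $g_\alpha=f-\frac{\alpha}{2}x^2$ used for Theorem \ref{b}.

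Next I would feed $g_\beta$ into the two convex-function inequalities \eqref{16} and \eqref{17}. Expanding $g_\beta=\frac{\beta}{2}x^2-f$, the quadratic term produces the variance-type expressions $\sum p_i x_i^2-(\sum p_i x_i)^2$ and $\frac1n\sum x_i^2-(\frac1n\sum x_i)^2$, while the $-f$ term reproduces the weighted and uniform Jensen gaps. The crucial structural point is that, since $f$ now enters with a minus sign, the rearranged forms of \eqref{16} and \eqref{17} reverse direction relative to Theorem \ref{b}: applying \eqref{16} (with $\lambda=r$) now yields an \emph{upper} estimate for the weighted Jensen gap, paired with $K^r(h,2)$, whereas applying \eqref{17} (with $\mu=R$) yields a \emph{lower} estimate, paired with $K^R(h,2)$.

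Then I would specialise to $n=2$, $x_1=a$, $x_2=b$, $p_1=1-v$, $p_2=v$, so that $\lambda=r$ and $\mu=R$. Routine evaluation gives the weighted Jensen gap $\log\frac{a\nabla_v b}{a\sharp_v b}$, the uniform gap $\frac12\log K(h,2)$, the weighted variance $v(1-v)(a-b)^2$, and the uniform variance $\frac{(a-b)^2}{4}$. Substituting these together with $\beta=1/d^2$ into the rearranged forms of \eqref{16} and \eqref{17}, absorbing the multiples of $\log K(h,2)$ into $\log K^r(h,2)$ and $\log K^R(h,2)$, and exponentiating, produces precisely the two inequalities of Theorem \ref{c}, now carrying $d$ in place of $D$: \eqref{16} delivers the upper bound with factor $\exp\bigl((\frac{v(1-v)}{2}-\frac{r}{4})(\frac{a-b}{d})^2\bigr)$ and \eqref{17} the lower bound with factor $\exp\bigl((\frac{v(1-v)}{2}-\frac{R}{4})(\frac{a-b}{d})^2\bigr)$.

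The main obstacle is bookkeeping rather than any new idea: one must track the sign changes introduced by the minus sign in $g_\beta$ so that the lower/upper roles of \eqref{16} and \eqref{17} come out swapped compared with Theorem \ref{b}, and verify that after combining the two variance terms the coefficients $\frac{v(1-v)}{2}-\frac{r}{4}$ and $\frac{v(1-v)}{2}-\frac{R}{4}$ emerge with the correct signs.
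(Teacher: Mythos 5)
Your proposal is correct and follows exactly the route the paper indicates: it repeats the proof of Theorem \ref{b} with the convex function $g_\beta(x)=\frac{\beta}{2}x^2-f(x)$, $\beta=\max_{x\in I}f''(x)=1/d^2$ for $f(x)=-\log x$, so that \eqref{16} and \eqref{17} exchange their roles and deliver the upper bound with $K^r$ and the lower bound with $K^R$, respectively. The intermediate quantities you compute (Jensen gaps $\log\frac{a\nabla_v b}{a\sharp_v b}$ and $\frac12\log K(h,2)$, variances $v(1-v)(a-b)^2$ and $\frac{(a-b)^2}{4}$) all check out.
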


\medskip

We end this paper by presenting the operator inequalities based on Theorems \ref{b} and \ref{c}, thanks to the Kubo-Ando theory \cite{kubo-ando}.
\begin{corollary}
	Let $A$, $B$ be two positive invertible operators and positive real numbers $m$, $m'$, $M$, $M'$ that satisfy one of the following conditions:
	\begin{itemize}
		\item[(i)] $0<m'I\le A\le mI<MI\le B\le M'I$.
		\item[(ii)] $0<m'I\le B\le mI<MI\le A\le M'I$.
	\end{itemize}
	Then
	\begin{equation}\label{21}
	\begin{aligned}
	& {{K}^{r}}\left( h,2 \right)\exp \left( \left( \frac{v\left( 1-v \right)}{2}-\frac{r}{4} \right){{\left( \frac{1-h}{h} \right)}^{2}} \right)A{{\sharp}_{v}}B \\ 
	& \le A{{\nabla }_{v}}B \\ 
	& \le {{K}^{R}}\left( h',2 \right)\exp \left( \left( \frac{v\left( 1-v \right)}{2}-\frac{R}{4} \right){{\left( \frac{1-h'}{h'} \right)}^{2}} \right)A{{\sharp}_{v}}B
	\end{aligned}
	\end{equation}
	and
	\[\begin{aligned}
	& {{K}^{R}}\left( h,2 \right)\exp \left( \left( \frac{v\left( 1-v \right)}{2}-\frac{R}{4} \right){{\left( \frac{1-h'}{h'} \right)}^{2}} \right)A{{\sharp}_{v}}B \\ 
	& \le A{{\nabla }_{v}}B \\ 
	& \le {{K}^{r}}\left( h',2 \right)\exp \left( \left( \frac{v\left( 1-v \right)}{2}-\frac{r}{4} \right){{\left( \frac{1-h}{h} \right)}^{2}} \right)A{{\sharp}_{v}}B,  
	\end{aligned}\]
where $r=\min \left\{ v,1-v \right\}$, $R=\max \left\{ v,1-v \right\}$ and $K(h,2) = \frac{(h+1)^2}{4h}$ with $h=\frac{M}{m}$ and $h'=\frac{M'}{m'}$.
\end{corollary}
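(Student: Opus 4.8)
The plan is to transfer the scalar estimates of Theorems \ref{b} and \ref{c} to the operator level by means of the congruence (transformer) identity underlying the Kubo--Ando calculus. Writing $X = A^{-1/2}BA^{-1/2}$, which is positive and invertible, I would record the two representations $A\sharp_v B = A^{1/2}X^v A^{1/2}$ and $A\nabla_v B = A^{1/2}\bigl((1-v)I + vX\bigr)A^{1/2}$. Since the congruence $Y \mapsto A^{1/2}YA^{1/2}$ preserves the operator order, everything reduces to a scalar inequality in the single variable $t \in Sp(X)$ to which the functional calculus can be applied.

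First I would localize $Sp(X)$. Under condition (i), from $B \ge MI$ and $A \le mI$ one gets $X \ge M A^{-1} \ge (M/m)I = hI$, and from $B \le M'I$, $A \ge m'I$ one gets $X \le (M'/m')I = h'I$; hence $Sp(X)\subseteq[h,h']$ with $1 < h \le h'$. Under condition (ii) the same computation yields $Sp(X)\subseteq[1/h', 1/h]$. Next, dividing the scalar inequality of Theorem \ref{b} by $a\sharp_v b$ and setting $a=1$, $b=t$ (both sides being homogeneous of degree zero) gives $\frac{a\nabla_v b}{a\sharp_v b} = \frac{(1-v)+vt}{t^v}$ with $D=\max\{1,t\}$, so Theorem \ref{b} becomes $G_r(t)\,t^v \le (1-v)+vt \le G_R(t)\,t^v$, where $G_r(t) = K^r(t,2)\exp\bigl((\tfrac{v(1-v)}{2}-\tfrac r4)(\tfrac{t-1}{t})^2\bigr)$ for $t\ge 1$ (with $\max$ replaced appropriately for $t<1$), and likewise for $G_R$.

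The crucial point, and the main obstacle, is to show that $G_r$ and $G_R$ are monotone on $[1,\infty)$, so that over $[h,h']$ the lower bound $G_r$ attains its minimum at $t=h$ and the upper bound $G_R$ its maximum at $t=h'$. I would verify this by differentiating the logarithms: using $\frac{d}{dt}\log K(t,2) = \frac{t-1}{t(t+1)}$ and $\frac{d}{dt}(\frac{t-1}{t})^2 = \frac{2(t-1)}{t^3}$, both logarithmic derivatives factor as $(t-1)$ times a bracket that is nonnegative for $t\ge 1$ (for $G_R$ this uses $\frac{v(1-v)}{2}-\frac R4\le 0$ together with $\frac{Rt^2}{t+1}\ge \frac R2 \ge \frac R2 - v(1-v)$). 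Hence both functions increase on $[1,\infty)$, which pins the extremal constants to the endpoints $h=M/m$ and $h'=M'/m'$.

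With the monotonicity in hand, $G_r(h)\,t^v \le (1-v)+vt \le G_R(h')\,t^v$ holds for every $t\in Sp(X)$; applying the functional calculus to $X$ and conjugating by $A^{1/2}$ yields $G_r(h)\,A\sharp_v B \le A\nabla_v B \le G_R(h')\,A\sharp_v B$, which is \eqref{21}. For condition (ii) I would exploit the reflection $t\mapsto 1/t$ together with the symmetry $K(t,2)=K(1/t,2)$: this carries the $t<1$ form of the bounding functions back to $G_r,G_R$ on $[h,h']$, so the endpoint values are again governed by $h$ and $h'$ and \eqref{21} persists. The second displayed inequality is obtained in exactly the same way, now starting from Theorem \ref{c} (whose $K^R/K^r$ and $-R/4,\,-r/4$ pattern it inherits) and using the corresponding monotone bounding functions; the only change is the bookkeeping of which endpoint supplies each constant. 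I expect the monotonicity verification and the careful matching of $r,R,h,h'$ across the two theorems and the two spectral configurations to be the only genuinely delicate steps.
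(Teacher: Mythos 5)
Your treatment of the first display \eqref{21} follows the paper's own route --- the congruence $Y\mapsto A^{1/2}YA^{1/2}$, the localization $Sp(A^{-1/2}BA^{-1/2})\subseteq[h,h']$ (resp.\ $[1/h',1/h]$), and the reduction to the scalar bounds of Theorem \ref{b} --- and in fact improves on it: the paper simply replaces $\min_{h\le x\le h'}$ and $\max_{h\le x\le h'}$ of the bounding functions by their values at $x=h$ and $x=h'$ without justification, whereas you supply the monotonicity of $G_r$ and $G_R$ on $[1,\infty)$ that this replacement requires. Your computations are correct: the logarithmic derivative of $G_r$ is $(t-1)\bigl[\tfrac{r}{t(t+1)}+\tfrac{v(1-v)-r/2}{t^3}\bigr]\ge 0$ because $\tfrac{v(1-v)}{2}\ge\tfrac{r}{4}$, and for $G_R$ the relevant bracket $\tfrac{Rt^2}{t+1}+v(1-v)-\tfrac{R}{2}\ge v(1-v)\ge 0$ for $t\ge1$; the reduction of case (ii) via $t\mapsto 1/t$ and $K(t,2)=K(1/t,2)$ is also sound.

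The genuine gap is your last step: the second display is \emph{not} obtained ``in exactly the same way.'' Theorem \ref{c} normalizes the exponent by $d=\min\{a,b\}$, so for $a=1\le b=t$ its lower bound reads $K^R(t,2)\exp\bigl(c_R(1-t)^2\bigr)$ with $c_R=\tfrac{v(1-v)}{2}-\tfrac{R}{4}\le 0$. This is the product of an increasing and a decreasing factor on $[1,\infty)$, so it is not monotone and no single endpoint is extremal; bounding the factors separately yields at best $K^R(h,2)\exp\bigl(c_R(1-h')^2\bigr)$, whose exponent is $(1-h')^2$ and not $\bigl(\tfrac{1-h'}{h'}\bigr)^2$ as printed. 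The printed constant is strictly larger and is in fact false in general: with $A=I$, $B=10I$ (so $h=h'=10$) and $v$ small, $K^{R}(10,2)\exp\bigl(-\tfrac14\cdot\tfrac{81}{100}\bigr)\approx 2.47$, while $A\nabla_v B\le 1.1\,A\sharp_v B$. So the ``bookkeeping of which endpoint supplies each constant'' is not routine; it is exactly where the argument --- and, as printed, the second inequality of the statement itself --- breaks down. The paper shares this gap (its proof of the second display is ``similar, we omit the details''), but your proposal should not certify that part as proved.
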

\begin{proof}
On account of \eqref{18}, we have
	\[\begin{aligned}
	& \underset{h\le x\le h'}{\mathop{\min }}\,\left\{ {{K}^{r}}\left( x,2 \right)\exp \left( \left( \frac{v\left( 1-v \right)}{2}-\frac{r}{4} \right){{\left( \frac{1-x}{\max \left\{ 1,x \right\}} \right)}^{2}} \right) \right\}{{T}^{v}} \\ 
	& \le \left( 1-v \right)I+vT \\ 
	& \le \underset{h\le x\le h'}{\mathop{\max }}\,\left\{ {{K}^{R}}\left( x,2 \right)\exp \left( \left( \frac{v\left( 1-v \right)}{2}-\frac{R}{4} \right){{\left( \frac{1-x}{\max \left\{ 1,x \right\}} \right)}^{2}} \right) \right\}{{T}^{v}},  
	\end{aligned}\]
	for the positive operator $T$ such that $hI\le T\le h'I$. Setting $T={{A}^{-\frac{1}{2}}}B{{A}^{-\frac{1}{2}}}$. 
	
	In the first case we have $I<hI=\frac{M}{m}I\le {{A}^{-\frac{1}{2}}}B{{A}^{-\frac{1}{2}}}\le \frac{M'}{m'}I=h'I$, which implies that
	\begin{equation}\label{20}
	\begin{aligned}
	& \underset{1\le h\le x\le h'}{\mathop{\min }}\,\left\{ {{K}^{r}}\left( x,2 \right)\exp \left( \left( \frac{v\left( 1-v \right)}{2}-\frac{r}{4} \right){{\left( \frac{1-x}{x} \right)}^{2}} \right) \right\}{{\left( {{A}^{-\frac{1}{2}}}B{{A}^{-\frac{1}{2}}} \right)}^{v}} \\ 
	& \le \left( 1-v \right)I+v{{A}^{-\frac{1}{2}}}B{{A}^{-\frac{1}{2}}} \\ 
	& \le \underset{1\le h\le x\le h'}{\mathop{\max }}\,\left\{ {{K}^{R}}\left( x,2 \right)\exp \left( \left( \frac{v\left( 1-v \right)}{2}-\frac{R}{4} \right){{\left( \frac{1-x}{x} \right)}^{2}} \right) \right\}{{\left( {{A}^{-\frac{1}{2}}}B{{A}^{-\frac{1}{2}}} \right)}^{v}}.  
	\end{aligned}
	\end{equation}
	We can write \eqref{20} in the form
	\[\begin{aligned}
	& {{K}^{r}}\left( h,2 \right)\exp \left( \left( \frac{v\left( 1-v \right)}{2}-\frac{r}{4} \right){{\left( \frac{1-h}{h} \right)}^{2}} \right){{\left( {{A}^{-\frac{1}{2}}}B{{A}^{-\frac{1}{2}}} \right)}^{v}} \\ 
	& \le \left( 1-v \right)I+v{{A}^{-\frac{1}{2}}}B{{A}^{-\frac{1}{2}}} \\ 
	& \le {{K}^{R}}\left( h',2 \right)\exp \left( \left( \frac{v\left( 1-v \right)}{2}-\frac{R}{4} \right){{\left( \frac{1-h'}{h'} \right)}^{2}} \right){{\left( {{A}^{-\frac{1}{2}}}B{{A}^{-\frac{1}{2}}} \right)}^{v}}.  
	\end{aligned}\]
	Finally, multiplying both sides of the previous inequality by ${{A}^{\frac{1}{2}}}$ we get the desired result \eqref{21}.
	
	The proof of other cases is similar, we omit the details. 
\end{proof}

\section*{Acknowledgement}
The authors thank anonymous referees for giving valuable comments and suggestions to improve our manuscript. 
The author (S.F.) was partially supported by JSPS KAKENHI Grant Number
16K05257.

\vskip 0.6 true cm

{\tiny (H.R. Moradi) Young Researchers and Elite Club, Mashhad Branch, Islamic Azad University, Mashhad, Iran.
	
{\it E-mail address:} hrmoradi@mshdiau.ac.ir
	
\vskip 0.4 true cm
	
{\tiny (S. Furuichi) Department of Information Science, College of Humanities and Sciences, Nihon University, 3-25-40, Sakurajyousui, Setagaya-ku, Tokyo, 156-8550, Japan.
		
{\it E-mail address:} furuichi@chs.nihon-u.ac.jp
		
\vskip 0.4 true cm

(F.-C. Mitroi-Symeonidis) Department of Mathematical Methods and Models, Faculty of Applied Sciences, University Politehnica of Bucharest, Romania.

{\it E-mail address:} fcmitroi@yahoo.com
	
\vskip 0.4 true cm
		
{\tiny (R. Naseri) Department of Mathematics, Payame Noor University, P.O. Box 19395-3697, Tehran, Iran.
	
{\it E-mail address:} raziyehnaseri29@gmail.com

\end{document}